\newtheorem{theorem}{Theorem}[section]
\newtheorem{lemma}[theorem]{Lemma}
\newtheorem{proposition}[theorem]{Proposition}
\newtheorem{corollary}[theorem]{Corollary}
\theoremstyle{definition}
\theoremstyle{remark}
\numberwithin{equation}{section}
\begin{document}
\title[Higher order generalization of Bernstein type operators defined by $%
(p,q)$-integers]{Some approximation results on higher order generalization
of Bernstein type operators defined by $(p,q)$-integers}
\author[M. Mursaleen and Md. Nasiruzzaman]{M. Mursaleen and Md. Nasiruzzaman}
\address{Department of Mathematics, Aligarh Muslim University,
Aligarh--202002, India.}
\email{%
\textcolor[rgb]{0.00,0.00,0.84}{mursaleenm@gmail.com;
nasir3489@gmail.com}}
\subjclass[2010]{41A10, 41A36.}
\keywords{$(p,q)$-integers; $(p,q)$-Bernstein operators; modulus of
continuity; approximation theorems}
\footnote{Corresponding author. Email: mursaleenm@gmail.com; Phone: +919411491600}

\begin{abstract}
In this paper, we introduce the higher order generalization of Bernstein
type operators defined by $(p,q)$-integers. We establish some approximation
results for these new operators by using the modulus of continuity.
\end{abstract}

\maketitle

\setcounter{page}{1}

%\dedicatory{This paper is dedicated to Professor ABCD}

\section{Introduction and preliminaries}

In 1912, S.N Bernstein \cite{n21} introduced the following sequence of
operators $B_{n}:C[0,1]\rightarrow C[0,1]$ defined for any $n\in \mathbb{N}$
and for any $f\in C[0,1]$ such as
\begin{align}  \label{eq1}
B_{n}(f;x)=\sum_{k=0}^{n}\binom{n}{k}x^{k}(1-x)^{n-k}f\left( \frac{k}{n}%
\right) ,~~~~~~~x\in \lbrack 0,1].
\end{align}
\parindent=8mmIn approximation theory, $q$-type generalization of Bernstein
polynomials was introduced by Lupa\c{s} \cite{n22}.

For $f \in C[0,1]$, the generalized Bernstein polynomial based on the $q$%
-integers is defined by Phillips \cite{n24} as follows
\begin{align}  \label{eq2}
{B}_{n,q}(f;x)=\sum\limits_{k=0}^{n}\left[
\begin{array}{c}
n \\
k%
\end{array}%
\right] _{q}x^{k}\prod\limits_{s=0}^{n-k-1}(1-q^{s}x)~~f\left( \frac{[k]_{q}
}{[n]_{q}}\right) ,~~x\in [0,1].
\end{align}

Recently, Mursaleen et al. \cite{n8} applied $(p,q)$-calculus in
approximation theory and introduced first $(p,q)$-analogue of Bernstein
operators(Revised) and defined as:

\begin{align}  \label{eq3}
{B}_{n,p,q}(f;x)=\frac{1}{p^{\frac{n(n-1)}{2}}}\sum_{k=0}^{n}f\left( \frac{[k]}{p^{k-n}[n]}\right)
P_{n,k}(p,q;x),~~~0<q<p\leq 1,~~x\in \lbrack 0,1]
\end{align}
where
\begin{equation}\label{e1q3}
P_{n,k}(p,q;x)=p^{\frac{k(k-1)}{2}}\left[
\begin{array}{c}
n \\
k%
\end{array}%
\right] _{p,q}x^{k}\prod_{s=0}^{n-k-1}(p^{s}-q^{s}x).
\end{equation}%
They have also introduced and studied approximation properties based on $%
(p,q)$-integers given as: Bernstein-Stancu operators \cite{n9} and
Bernstein-Shurer operators \cite{n19}.

We recall some basic properties of $(p,q)$-integers. \newline

The $(p,q)$-integer $[n]_{p,q}$ is defined by
\begin{equation*}
\lbrack n]_{p,q}=\frac{p^{n}-q^{n}}{p-q},~~~~~~~n=0,1,2,\cdots ,~~0<q<p\leq
1.
\end{equation*}%
The $(p,q)$-Binomial expansion is

\begin{equation*}
(x+y)_{p,q}^{n}:=(x+y)(px+qy)(p^{2}x+q^{2}y)\cdots (p^{n-1}x+q^{n-1}y)
\end{equation*}%
and the $(p,q)$-binomial coefficients are defined by

\begin{equation*}
\left[
\begin{array}{c}
n \\
k%
\end{array}%
\right] _{p,q}:=\frac{[n]_{p,q}!}{[k]_{p,q}![n-k]_{p,q}!}.
\end{equation*}%
For $p=1$, all the notions of $(p,q)$-calculus are reduced to $q$-calculus.
For details on $q$-calculus and $(p,q)$-calculus, one can refer \cite%
{n12,n13,n0,n5} and \cite{n22}, respectively. In this paper we use the notation $[n]$ in place of $[n]_{p,q}$.\newline

In \cite{n5}, $(p,q)$-derivative of a function $f(x)$ is defined by
\begin{align}  \label{eq4}
D_{p,q}f(x)=\frac{f(px)-f(qx)}{(p-q)x},~~~x\neq 0,
\end{align}
and the formulae for the $(p,q)$-derivative for the product of two
functions is given as

\begin{align}  \label{eq5}
D_{p,q}(fg)(x)=f(px).D_{p,q}g(x)+\{D_{p,q} f(x)\}.g(qx),
\end{align}
also
\begin{align}  \label{eq6}
D_{p,q}(fg)(x)=f(qx).D_{p,q}g(x)+\{D_{p,q} f(x)\}.g(px).
\end{align}

Let $r\in \mathbb{N}\cup \{0\}$ be a fixed number. For $f\in C^{r}[0,1]$ and
$m\in \mathbb{N}$, we define an operator of $r^{th}$ order for $(p,q)$-
Bernstein type operators as follows:
\begin{align}  \label{eq25}
B_{n,p,q}^{[r]}(f;x)=\frac{1}{p^{\frac{n(n-1)}{2}}}\sum_{k=0}^{n}P_{n,k}(p,q;x)\sum_{i=0}^{r}\frac{1}{i!}%
f^{(i)}\left( \frac{[k]}{p^{k-n}[n]}\right) \left( x-\frac{[k]}{p^{k-n}[n]}\right) ^{i}.
\end{align}

In this paper, using the moment estimates from \cite{n1}, we give the
estimates of the central moments for operators defined by \eqref{eq3}. We
also study some approximation properties of an $r^{th}$ order generalization
of the operators defined by \eqref{eq25} using the techniques of the work on
the higher order generalization of $q$-analogue \cite{n2}. Further, we study
approximation properties and prove Voronovskaja type theorem for these
operators.

\parindent=8mmIf we put $p=1$, then we get the moments for $q$-Bernstein
operators \cite{n1} and the usual generalization higher order $q$-Bernstein
operators \cite{n2}, respectively.

\section{Main results}
The following result is $(p,q)$-analogue of \cite{n0}.

\begin{proposition}\label{t1}
\label{t1} For $n\geq 1,~~0<q<p\leq 1$
\begin{align}  \label{eq7}
D_{p,q}(1+x)_{p,q}^n=[n](1+qx)_{p,q}^{n-1}.
\end{align}
\end{proposition}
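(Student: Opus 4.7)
The plan is to apply the definition of the $(p,q)$-derivative from \eqref{eq4} directly, so that
$$D_{p,q}(1+x)_{p,q}^n = \frac{(1+px)_{p,q}^n - (1+qx)_{p,q}^n}{(p-q)x},$$
and then factor the numerator to expose a common factor of $(1+qx)_{p,q}^{n-1}$.

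First I would expand using the defining product, writing $(1+px)_{p,q}^n = \prod_{k=0}^{n-1}(p^k + p q^k x)$. The $k=0$ factor is $(1+px)$; for $k \geq 1$, I would pull out a factor of $p$ to obtain $p^k + pq^k x = p(p^{k-1}+q^k x)$. Re-indexing via $j = k-1$ then shows that $\prod_{k=1}^{n-1}(p^{k-1}+q^k x) = \prod_{j=0}^{n-2}(p^j + q\cdot q^j x) = (1+qx)_{p,q}^{n-1}$, which yields the key factorization
$$(1+px)_{p,q}^n = p^{n-1}(1+px)\,(1+qx)_{p,q}^{n-1}.$$
Next, by peeling off the last factor in $(1+qx)_{p,q}^n = \prod_{k=0}^{n-1}(p^k+q^{k+1}x)$, I would obtain
$$(1+qx)_{p,q}^n = (p^{n-1}+q^n x)\,(1+qx)_{p,q}^{n-1}.$$

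Subtracting, the common factor $(1+qx)_{p,q}^{n-1}$ pulls out and the remaining bracket collapses telescopically to $p^{n-1}(1+px) - (p^{n-1}+q^n x) = (p^n - q^n)x$. Dividing by $(p-q)x$ and recognizing $[n]=(p^n-q^n)/(p-q)$ then delivers \eqref{eq7}.

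The main (mild) obstacle will be spotting the factorization of $(1+px)_{p,q}^n$: one must notice that the $k\geq 1$ factors admit a uniform extraction of $p$, after which the leftover product re-indexes to $(1+qx)_{p,q}^{n-1}$, exactly matching the common factor in $(1+qx)_{p,q}^n$. Once this is seen, the rest is purely algebraic cancellation. An alternative route is an induction on $n$ using the product rule \eqref{eq5} with $f(x)=(1+x)_{p,q}^{n-1}$ and $g(x)=p^{n-1}+q^{n-1}x$, but this introduces additional bookkeeping to reconcile $f(px)=(1+px)_{p,q}^{n-1}$ with $(1+qx)_{p,q}^{n-2}$, so the direct approach above is cleaner.
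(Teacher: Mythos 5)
Your proposal is correct and takes essentially the same route as the paper: the paper's proof consists precisely of the two factorizations $(1+px)_{p,q}^{n}=p^{n-1}(1+px)(1+qx)_{p,q}^{n-1}$ and $(1+qx)_{p,q}^{n}=(p^{n-1}+q^{n}x)(1+qx)_{p,q}^{n-1}$ followed by an application of the definition \eqref{eq4} of the $(p,q)$-derivative. You merely spell out the ``simple calculation'' (the extraction of $p$ and re-indexing) that the paper leaves implicit, so the two arguments coincide.
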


\begin{proof}
By applying simple calculation on $(p,q)$-analogue, we have
\begin{align}  \label{eq8}
(1+px)_{p,q}^{n}=p^{n-1}(1+px)(1+qx)_{p,q}^{n-1},(1+qx)_{p,q}^{n}=(p^{n-1}+q^{n}x)(1+qx)_{p,q}^{n-1}.
\end{align}
Applying $(p,q)$-derivative and result \eqref{eq8} we get the desired result.
\end{proof}

\begin{lemma} \label{t2}
\label{t2}  Let $B_{n,p,q}(f;x)$ be given by \eqref{eq3}. Then for any $m
\in \mathbb{N},~~x \in [0,1] $ and $0<q<p\leq 1$ we have
\begin{eqnarray*}
B_{n,p,q}\left((t-x)_{p,q}^{m+1};x\right) &=&\frac{p^{m+n}x(1-x)}{[n]}
D_{p,q}\bigg\{B_{n,p,q}\left((t-\frac{x}{p})_{p,q}^{m};\frac{x}{p}\right)%
\bigg\} \\
&+&\frac{p^{m+n-1}[m]x(1-x)}{[n]} B_{n,p,q}\left((t-\frac{qx}{p}%
)_{p,q}^{m-1};\frac{qx}{p}\right) \\
&+&\frac{[m](p^n-q^n)x}{[n]} B_{n,p,q}\left((t-x)_{p,q}^{m};x\right).
\end{eqnarray*}
\end{lemma}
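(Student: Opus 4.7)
The plan is to derive the identity by direct computation from the series definition of $B_{n,p,q}((t-x)_{p,q}^{m+1};x)$, exploiting the factorisation
\begin{equation*}
(a-b)_{p,q}^{m+1}=(a-b)(pa-qb)_{p,q}^{m}=p^{m}(a-b)(a-qb/p)_{p,q}^{m}
\end{equation*}
of the $(p,q)$-falling power, the defining difference-quotient form of $D_{p,q}$, and the product rules \eqref{eq5}--\eqref{eq6}. Writing $x_{k}:=[k]/(p^{k-n}[n])$ and $F_{j}(u):=B_{n,p,q}((t-u)_{p,q}^{j};u)$, the first observation is that by the definition of the $(p,q)$-derivative
\begin{equation*}
D_{p,q}\{F_{m}(x/p)\}=\frac{F_{m}(x)-F_{m}(qx/p)}{(p-q)x},
\end{equation*}
so that after invoking $[n](p-q)=p^{n}-q^{n}$, the first right-hand side summand becomes $\tfrac{p^{m+n}(1-x)}{p^{n}-q^{n}}\bigl\{F_{m}(x)-F_{m}(qx/p)\bigr\}$.

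On the left-hand side I would expand $F_{m+1}(x)$ using the above factorisation of $(x_{k}-x)_{p,q}^{m+1}$, then handle the linear factor $(x_{k}-x)$ via the identity
\begin{equation*}
(x_{k}-x)P_{n,k}(p,q;x)=x\bigl\{p^{n-1}P_{n-1,k-1}(p,q;x)-P_{n,k}(p,q;x)\bigr\},
\end{equation*}
which follows from the index shift $[k]P_{n,k}(p,q;x)=p^{k-1}[n]x\,P_{n-1,k-1}(p,q;x)$ obtained from $[n]\tbinom{n-1}{k-1}_{p,q}=[k]\tbinom{n}{k}_{p,q}$. This yields a difference of two sums, each weighted by $(x_{k}-qx/p)_{p,q}^{m}$; one involves $P_{n-1,k-1}(p,q;x)$ and, after peeling off a further factor of $(x_{k}-qx/p)$ from the $(p,q)$-power $(x_{k}-qx/p)_{p,q}^{m}$, produces the second right-hand side summand together with its $[m]$-coefficient, while the other involves $P_{n,k}(p,q;x)$ and, after converting to the $P_{n,k}(p,q;qx/p)$-weighted form via the explicit ratio coming from $(1-qx/p)_{p,q}^{n-k}=p^{-(n-k)}(1-x)_{p,q}^{n-k+1}/(1-x)$, is split between the $D_{p,q}$ term and the third right-hand side summand.

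The main obstacle is the bookkeeping: the three evaluation points $x$, $x/p$ and $qx/p$ must be tracked separately inside both the monomial factor $(t-\cdot)_{p,q}^{m}$ and the basis weight $P_{n,k}(p,q;\cdot)$, and the index-shifted basis $P_{n-1,k-1}$ must be reconciled with the nodes $x_{k}$ of $B_{n,p,q}$ rather than the nodes of $B_{n-1,p,q}$. The $(p,q)$-asymmetry $p\ne q$ is precisely what forces the $(p^{n}-q^{n})$-factor to survive in the third summand. Once the conversions above are consistently applied, the verification reduces to elementary $(p,q)$-integer identities such as $[n](p-q)=p^{n}-q^{n}$ and $[m+1]=p[m]+q^{m}$.
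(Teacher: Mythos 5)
Your preparatory identities are all correct and some are attractive: the difference--quotient form $D_{p,q}\{F_m(x/p)\}=\frac{F_m(x)-F_m(qx/p)}{(p-q)x}$ with $F_j(u):=B_{n,p,q}\left((t-u)_{p,q}^{j};u\right)$ is a legitimate (and cleaner) substitute for the paper's route, which instead applies the product rule \eqref{eq5} and computes $D_{p,q}\{P_{n,k}(p,q;\frac{x}{p})\}$ explicitly; the factorisation $(a-b)_{p,q}^{m+1}=p^{m}(a-b)(a-qb/p)_{p,q}^{m}$, the index shift $[k]P_{n,k}(p,q;x)=p^{k-1}[n]x\,P_{n-1,k-1}(p,q;x)$, and the rescaling of $(1-qx/p)_{p,q}^{n-k}$ are all true. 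The genuine gap is in the only step that actually proves the lemma: your claimed attribution that the $P_{n-1,k-1}$--weighted sum ``produces the second right-hand side summand'' while the $P_{n,k}$--weighted sum ``is split between the $D_{p,q}$ term and the third summand.'' That split is false, already for $n=m=1$. There $B_{1,p,q}\left((t-x)_{p,q}^{2};x\right)=px(1-x)$; your $P_{n-1,k-1}$--part equals $p^{m+n-1}x\,(1-qx/p)=px-qx^{2}$, whereas the second right-hand summand equals $p x(1-x)\,F_{0}(qx/p)=px-px^{2}$, so they differ by $(p-q)x^{2}$; meanwhile the $D_{p,q}$--term and the third summand both vanish in this case (the first $(p,q)$-central moment is identically zero), while your $P_{n,k}$--part equals $-(p-q)x^{2}\neq 0$. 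Hence the two sums cannot be matched to the right-hand side in the way you describe: an essential cross-cancellation between them is required, and this recombination --- which is the entire content of the lemma --- is exactly what you leave as unverified ``bookkeeping.'' As sketched, the argument would not close.

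A repair that stays within your framework (and is genuinely different from the paper's proof) is to drop the detour through $P_{n-1,k-1}$ altogether: after replacing the $D_{p,q}$--term by your difference quotient, convert every $P_{n,k}(p,q;qx/p)$ back to the weight $P_{n,k}(p,q;x)$ via $P_{n,k}(p,q;qx/p)=\frac{p^{n}(1-x)-(p^{n}-q^{n})(t_{k}-x)}{p^{n}(1-x)}\,P_{n,k}(p,q;x)$, where $t_{k}=\frac{[k]}{p^{k-n}[n]}$ and one uses $q^{k}p^{n-k}=p^{n}-(p^{n}-q^{n})t_{k}$. The asserted identity then reduces, coefficient of $P_{n,k}(p,q;x)$ by coefficient, to a pointwise identity in $t_{k}$ which follows from $(t-x)_{p,q}^{m}-(t-qx/p)_{p,q}^{m}=-\frac{(p-q)[m]x}{p}(t-qx/p)_{p,q}^{m-1}$, $(t-x)_{p,q}^{m+1}=p^{m}(t-x)(t-qx/p)_{p,q}^{m}$ and $[n](p-q)=p^{n}-q^{n}$. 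That completes a proof along your general lines, but it is not the splitting you proposed, and the proposal as written does not contain it.
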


\begin{proof}
First of all by using \eqref{eq5} and Lemma \ref{t1}, we have\newline
$D_{p,q}\left(\frac{1}{p^{\frac{n(n-1)}{2}}} \sum_{k=0}^{n}\left( t-\frac{x}{p}\right)
_{p,q}^{m}P_{n,k}(p,q;\frac{x}{p})\right) $
\begin{align}  \label{eq100}
&=& \frac{1}{p^{\frac{n(n-1)}{2}}}\left(\sum_{k=0}^{n}\left( t-x\right)_{p,q}^{m}D_{p,q}\{P_{n,k}(p,q;\frac{x}{p%
})\}-\frac{[m]}{p}\sum_{k=0}^{n}\left( t-\frac{qx}{p}\right)
_{p,q}^{m-1}P_{n,k}(p,q;\frac{qx}{p})\right).
\end{align}%
Now in the same way by using \eqref{eq5} and Lemma \ref{t1}, we have \newline
$D_{p,q}\bigg\{P_{n,k}\left( p,q;\frac{x}{p}\right) \bigg\}
=D_{p,q}\bigg\{p^{\frac{k(k-1)}{2}} \lbrack k]
\left[
\begin{array}{c}
n \\
k%
\end{array}%
\right] _{p,q} 
\left(\frac{x}{p}\right)^k \left(1-\frac{x}{p}\right)^k \bigg\}$
\begin{align}  \label{eq125}
= p^{\frac{k(k-1)}{2}} \left(\lbrack k]
\left[
\begin{array}{c}
n \\
k%
\end{array}%
\right] _{p,q}\frac{1}{p^{k}}x^{k-1}\left( 1-\frac{qx}{p}\right)
_{p,q}^{n-k}-[n-k]\left[
\begin{array}{c}
n \\
k%
\end{array}%
\right] _{p,q}\frac{1}{p}x^{k}\left( 1-\frac{qx}{p}\right) _{p,q}^{n-k-1} \right).
\end{align}%
%\end{eqnarray*}

Now by a simple calculation, we have
\begin{align}  \label{eq126}
\left( 1-\frac{qx}{p}\right) _{p,q}^{n-k}=\frac{1}{p^{n-k}}%
(p-qx)_{p,q}^{n-k+1}=\frac{1}{p^{n-k}}\frac{1}{(1-x)}%
(p^{n-k}-q^{n-k}x)(1-x)_{p,q}^{n-k}
\end{align}%
\begin{align}  \label{eq127}
\left( 1-\frac{qx}{p}\right) _{p,q}^{n-k-1}=\frac{1}{p^{n-k-1}}\frac{1}{(1-x)%
}(1-x)_{p,q}^{n-k}.
\end{align}%
From \eqref{eq125},\eqref{eq126} and \eqref{eq127}, we get
\begin{equation*}
D_{p,q}\bigg\{P_{n,k}\left( p,q;\frac{x}{p}\right) \bigg\}=\frac{%
P_{n,k}(p,q;x)}{p^{n}x(1-x)}\left( [k](p^{n-k}-q^{n-k}x)-p^{k}[n-k]x\right) ,
\end{equation*}%
which implies that
\begin{align}  \label{eq128}
D_{p,q}\bigg\{P_{n,k}\left( p,q;\frac{x}{p}\right) \bigg\}=\frac{%
P_{n,k}(p,q;x)}{p^{n}x(1-x)}\left( p^{n-k}[k]-[n]x\right) .
\end{align}%
From \eqref{eq100}, \eqref{eq128}, we have

$D_{p,q} \left( \sum_{k=0}^n \left( t-\frac{x}{p}\right)_{p,q}^m P_{n,k}(p,q;%
\frac{x}{p})\right)$
\begin{eqnarray*}
&=& -\frac{1}{p^{\frac{n(n-1)}{2}}}\frac{[m]}{p}\sum_{k=0}^n \left( t-\frac{qx}{p}%
\right)_{p,q}^{m-1}P_{n,k}(p,q;\frac{qx}{p}) \\
&+&\frac{1}{p^{\frac{n(n-1)}{2}}}\frac{1}{p^{n}x(1-x)}\sum_{k=0}^n \left(t-x\right)_{p,q}^m
P_{n,k}(p,q;x)(p^{n-k}[k]-[n]x) \\
&=& -\frac{1}{p^{\frac{n(n-1)}{2}}}\frac{[m]}{p}\sum_{k=0}^n \left( t-\frac{qx}{p}%
\right)_{p,q}^{m-1}P_{n,k}(p,q;\frac{qx}{p}) \\
&+&\frac{1}{p^{\frac{n(n-1)}{2}}}\frac{1}{p^{n}x(1-x)}\sum_{k=0}^n \left(t-x\right)_{p,q}^m P_{n,k}(p,q;x)
\\
& \times & \left(\frac{[n]}{p^m} (p^{m}t-q^mx)-\frac{[n]}{p^m}(p^{m}-q^m)x\right).
\end{eqnarray*}%
\newline
Hence we have\newline
$D_{p,q} \left\{ B_{n,p,q} \left(( t-\frac{x}{p})_{p,q}^{m};\frac{x}{p}%
\right)\right\}$
\begin{eqnarray*}
&=& -\frac{[m]}{p} B_{n,p,q} \left(( t-\frac{qx}{p})_{p,q}^{m-1};\frac{qx}{p}%
\right) + \frac{[n]}{p^{m+n}x(1-x)}B_{n,p,q} \left( (
t-x)_{p,q}^{m+1};x\right) \\
&-&\frac{[m](p^n-q^n)}{p^{m+n}(1-x)}B_{n,p,q} \left( (
t-x)_{p,q}^{m};x\right).
\end{eqnarray*}
This complete the proof of Lemma \ref{t2}.
\end{proof}

\begin{lemma}\label{t3}
Let $B_{n,p,q}\left( (t-x)_{p,q}^{m};x\right) $ be a polynomial
in $x$ of degree less than or equal to $m$ and the minimum degree of $\frac{1%
}{[n]}$ is $\lfloor \frac{m+1}{2}\rfloor $. Then for any fixed $m\in \mathbb{%
N}$ and $x\in \lbrack 0,1],~~0<q<p\leq 1$ we have
\begin{align}\label{eq0}
B_{n,p,q}\left( (t-x)_{p,q}^{m};x\right) =\frac{x(1-x)}{[n]^{\lfloor \frac{%
m+1}{2}\rfloor }}\sum_{k=0}^{m-2}b_{k,m,n}(p,q)x^{k},
\end{align}%
such that the coefficients $b_{k,m,n}(p,q)$ satisfy $\mid b_{k,m,n}(p,q)\mid
\leq b_{m},~~k=1,2,\cdots ,m-2$ and $b_{m}$ does not depend on $x,t,p,q$;
where $\lfloor a\rfloor $ is an integer part of $a\geq 0.$
\end{lemma}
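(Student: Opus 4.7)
The plan is to induct on $m$, using the recurrence of Lemma \ref{t2} to reduce moments of order $m+1$ to moments of order at most $m$. For the base cases: $m=0$ gives $B_{n,p,q}(1;x)=1$; $m=1$ follows from the linear moment $B_{n,p,q}(t;x)=x$ of \cite{n1}, so $B_{n,p,q}(t-x;x)=0$; and $m=2$ follows by direct computation from the second $(p,q)$-moment of \cite{n1}, giving an expression of shape $\frac{x(1-x)}{[n]}c_n(p,q)$ with $c_n(p,q)$ uniformly bounded. In all cases the claimed form holds (with the empty-sum convention for $m=0,1$).

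For the inductive step, assume the claim holds in the form $B_{n,p,q}((t-u)_{p,q}^{m'};u)=\frac{u(1-u)}{[n]^{\lfloor(m'+1)/2\rfloor}}\sum_{k=0}^{m'-2}b_{k,m',n}(p,q)\,u^k$ for every $m'\le m$, with uniformly bounded coefficients. I would substitute this ansatz into each of the three summands of Lemma \ref{t2} and verify that the resulting expression for $B_{n,p,q}((t-x)_{p,q}^{m+1};x)$ is (i) a polynomial in $x$ of degree at most $m+1$, (ii) divisible by $1/[n]^{\lfloor(m+2)/2\rfloor}$, and (iii) has coefficients bounded by a constant depending only on $m+1$.

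The degree check in (i) is immediate, since each summand multiplies the inductive polynomial by $x$ or $x(1-x)$ while $D_{p,q}$ decreases degree by one. For (ii), the first term carries a prefactor $1/[n]$ over the inductive factor $1/[n]^{\lfloor(m+1)/2\rfloor}$, producing exponent $\lfloor(m+1)/2\rfloor+1\ge\lfloor(m+2)/2\rfloor$; the second term has prefactor $1/[n]$ over $1/[n]^{\lfloor m/2\rfloor}$, giving exactly $\lfloor m/2\rfloor+1=\lfloor(m+2)/2\rfloor$; the third term attains the required exponent similarly. For (iii), $0<q<p\le 1$ forces $p^j\le 1$, $p^n-q^n\le 1$, and $[m]_{p,q}\le m$; moreover the identity $D_{p,q}(x^k)=[k]_{p,q}\,x^{k-1}$ shows that the $(p,q)$-derivative preserves the uniform coefficient bound.

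The main obstacle will be the careful bookkeeping inside the first term, where $D_{p,q}$ acts on $\phi_m(u):=\frac{u(1-u)}{[n]^{\lfloor(m+1)/2\rfloor}}\sum_k b_{k,m,n}(p,q)\,u^k$ evaluated at $u=x/p$. Using $D_{p,q}\{\phi_m(x/p)\}=\bigl(\phi_m(x)-\phi_m(qx/p)\bigr)/\bigl((p-q)x\bigr)$, I must check that the factor $u$ inside $\phi_m$ cancels the $x$ in the denominator so that no negative power of $x$ survives, that the outer $x(1-x)$ reproduces exactly the required prefactor of the statement, and that after collecting monomials the sum indeed ranges over $k=0,\ldots,(m+1)-2$. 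Once these details are in place, combining the three contributions yields the claim for $m+1$ and closes the induction.
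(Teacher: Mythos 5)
Your plan is essentially the paper's own proof: strong induction on $m$, feeding the ansatz $\frac{u(1-u)}{[n]^{\lfloor (m'+1)/2\rfloor}}\sum_k b_{k,m',n}(p,q)u^k$ (for $m'=m$ and $m'=m-1$) into the three terms of Lemma \ref{t2}, computing the $(p,q)$-derivative term-by-term via $D_{p,q}(x^k)=[k]x^{k-1}$, matching the exponent of $1/[n]$ against $\lfloor\frac{m+2}{2}\rfloor$, and bounding the new coefficients using $p^j\le 1$, $p^n-q^n\le 1$, $[m]\le m$. The bookkeeping you flag as the ``main obstacle'' is exactly the computation the paper carries out, so no new idea is needed beyond executing it.
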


%\newpage

\begin{proof}
By induction it is true for $m=2$. Assuming it is true for $m$, then from
Lemma \ref{t2} and equation \eqref{eq0} we have\newline
\newline
$B_{n,p,q}\left( (t-x)_{p,q}^{m+1};x\right) $
\begin{eqnarray*}
&=&\frac{p^{m+n-1}x(1-x)}{{[n]}^{1+\lfloor \frac{m+1}{2}\rfloor }}%
D_{p,q}\left\{ x\left( 1-\frac{x}{p}\right)
\sum_{k=0}^{m-2}b_{k,m,n}(p,q)\left( \frac{x}{p}\right) ^{k}\right\}  \\
&+&\frac{p^{m+n-1}[m]{x(1-x)}}{{[n]}^{1+\lfloor \frac{m}{2}\rfloor }}%
\sum_{k=0}^{m-3}b_{k,m-1,n}(p,q)\left( \left( \frac{q}{p}\right)
^{k+1}x^{k+1}-\left( \frac{q}{p}\right) ^{k+2}x^{k+2}\right)  \\
&+&\frac{[m](p^{n}-q^{n}){x(1-x)}}{{[n]}^{1+\lfloor \frac{m+1}{2}\rfloor }}%
\sum_{k=0}^{m-2}b_{k,m,n}(p,q)x^{k+1} \\
&=&\frac{p^{m+n}{x(1-x)}}{{[n]}^{1+\lfloor \frac{m+1}{2}\rfloor }}%
\sum_{k=0}^{m-2}[k]b_{k,m,n}(p,q)\frac{1}{p^{k}}(x^{k}-x^{k+1}) \\
&+&\frac{p^{m+n-1}{x(1-x)}}{{[n]}^{1+\lfloor \frac{m+1}{2}\rfloor }}%
\sum_{k=0}^{m-2}b_{k,m,n}(p,q)\left( \frac{q}{p}\right) ^{k}\left( x^{k}-%
\frac{[2]}{p}x^{k+1}\right)  \\
&+&\frac{p^{m+n-1}[m]{x(1-x)}}{{[n]}^{1+\lfloor \frac{m}{2}\rfloor }}%
\sum_{k=0}^{m-3}b_{k,m-1,n}(p,q)\left( \left( \frac{q}{p}\right)
^{k+1}x^{k+1}-\left( \frac{q}{p}\right) ^{k+2}x^{k+2}\right)  \\
&+&\frac{[m](p^{n}-q^{n}){x(1-x)}}{{[n]}^{1+\lfloor \frac{m+1}{2}\rfloor }}%
\sum_{k=0}^{m-2}b_{k,m,n}(p,q)x^{k+1}
\end{eqnarray*}%
\begin{eqnarray*}
&=&\frac{{x(1-x)}}{{[n]}^{1+\lfloor \frac{m+1}{2}\rfloor }}%
\sum_{k=0}^{m-2}\left( p^{m+n-k}[k]+p^{m+n-k-1}q^{k}\right)
b_{k,m,n}(p,q)x^{k} \\
&-&\frac{{x(1-x)}}{{[n]}^{1+\lfloor \frac{m+1}{2}\rfloor }}%
\sum_{k=1}^{m-1}\left( p^{m+n+1-k}[k-1]_{p,q}+[2]p^{m+n-k-1}q^{k-1}\right)
b_{k-1,m,n}(p,q)x^{k} \\
&+&\frac{{x(1-x)}}{{[n]}^{1+\lfloor \frac{m}{2}\rfloor }}%
\sum_{k=1}^{m-2}[m]p^{m+n-k-1}q^{k}b_{k-1,m-1,n}(p,q)x^{k} \\
&-&\frac{{x(1-x)}}{{[n]}^{1+\lfloor \frac{m}{2}\rfloor }}%
\sum_{k=2}^{m-1}[m]p^{m+n-k-1}q^{k}b_{k-2,m-1,n}(p,q)x^{k} \\
&+&\frac{x(1-x)}{{[n]}^{1+\lfloor \frac{m+1}{2}\rfloor }}%
\sum_{k=1}^{m-1}[m](p^{n}-q^{n})b_{k-1,m,n}(p,q)x^{k}\\
&=&\frac{x(1-x)}{{[n]}^{\lfloor \frac{m+2}{2}\rfloor }}%
\sum_{k=0}^{m-1}b_{k,m+1,n}(p,q)x^{k}
\end{eqnarray*}%
where
\begin{eqnarray*}
b_{k,m+1,n}(p,q) &=&\frac{1}{[n]^{\alpha }}\left(
p^{m+n-k}[k]+p^{m+n-k-1}q^{k}\right) b_{k,m,n}(p,q) \\
&-&\frac{1}{[n]^{\alpha }}\left(
p^{m+n+1-k}[k-1]+[2]p^{m+n-k-1}q^{k-1}\right) b_{k-1,m,n}(p,q) \\
&+&\frac{1}{[n]^{\alpha }}%
[m](p^{n}-q^{n})b_{k-1,m,n}(p,q)+[m]p^{m+n-k-1}q^{k}b_{k-1,m-1,n}(p,q) \\
&-&[m]p^{m+n-k-1}q^{k}b_{k-2,m-1,n}(p,q). \\
\end{eqnarray*}%
Clearly
\begin{equation*}
\alpha =1+\lfloor \frac{m+1}{2}\rfloor -\lfloor \frac{m+2}{2}\rfloor
,~~~0\leq k\leq m-1,
\end{equation*}%
which lead us that either $\alpha =0$ or $\alpha =1$.\newline
Since $\mid b_{k,m,n}(p,q)\mid \leq b_{m}$, for $k=m-1$, clearly we have
\begin{eqnarray*}
\mid b_{k,m+1,n}(p,q)\mid  &\leq &\frac{1}{[n]^{\alpha }}\left(
p^{n+1}[m-1]+p^{n}q^{m-1}\right) b_{m}+\frac{1}{[n]^{\alpha }}\left(
p^{n+2}[m-2]+[2]p^{n}q^{m-2}\right) b_{m} \\
&+&\frac{1}{[n]^{\alpha }}[m](p^{n}-q^{n})b_{m}+[m]p^{n}q^{m-1}b_{m-1} \\
&+&[m]p^{n}q^{m-1}b_{m-1} \\
&=&\frac{1}{[n]^{\alpha }}\left( p[m-1]+q^{m-1}\right) b_{m}+\frac{1}{%
[n]^{\alpha }}\left( p^{2}[m-2]+[2]q^{m-2}\right) b_{m} \\
&+&\frac{1}{[n]^{\alpha }}[m]b_{m}+[m]q^{m-1}b_{m-1}+[m]q^{m-1}b_{m-1} \\
&=&b_{m+1},~~~~~k=1,2,\cdots m-1,
\end{eqnarray*}%
and $b_{m}$ does not depend on $x,t,p,q$. This complete the proof.
\end{proof}

From the Lemma \ref{t2} and Lemma \ref{t3} we have the following theorem.

\begin{theorem}\label{t4}
Let $m \in \mathbb{N}$ and $0<q<p\leq 1$. Then there exits a
constant $C_m >0$ such that for any $x\in [0,1]$, we have
\begin{equation*}
\mid B_{n,p,q}\left((t-x)_{p,q}^{m};x\right) \mid \leq C_m \frac{x(1-x)}{%
[n]^{\lfloor \frac{m+1}{2}\rfloor}}.
\end{equation*}
\end{theorem}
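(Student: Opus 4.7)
My plan is to read off the bound directly from the structural representation supplied by Lemma \ref{t3}, which has already done all the hard work (the induction on $m$ using the recursion from Lemma \ref{t2}). The theorem is essentially a triangle-inequality corollary of that lemma.

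First, I would invoke Lemma \ref{t3} to write
\[
B_{n,p,q}\left((t-x)_{p,q}^{m};x\right)=\frac{x(1-x)}{[n]^{\lfloor \frac{m+1}{2}\rfloor}}\sum_{k=0}^{m-2}b_{k,m,n}(p,q)\,x^{k},
\]
keeping in mind that the coefficients $b_{k,m,n}(p,q)$ are bounded in absolute value by a constant $b_{m}$ that depends only on $m$ (and not on $x$, $t$, $p$, or $q$). Next, I would apply the triangle inequality and use $x^{k}\le 1$ for $x\in[0,1]$ to estimate
\[
\left|\sum_{k=0}^{m-2}b_{k,m,n}(p,q)\,x^{k}\right|\le \sum_{k=0}^{m-2}|b_{k,m,n}(p,q)|\,x^{k}\le (m-1)\,b_{m}.
\]
Combining these two steps yields
\[
\bigl|B_{n,p,q}\left((t-x)_{p,q}^{m};x\right)\bigr|\le (m-1)\,b_{m}\cdot\frac{x(1-x)}{[n]^{\lfloor \frac{m+1}{2}\rfloor}},
\]
so the theorem follows with the choice $C_{m}:=(m-1)\,b_{m}$.

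Since the substantive work (the recursive identity of Lemma \ref{t2} and the inductive bound on the coefficients in Lemma \ref{t3}) has already been carried out, there is no real obstacle left. The only mild point of care is making sure the coefficient bound in Lemma \ref{t3} covers every index $k$ appearing in the sum; if any boundary index is not explicitly included in the stated range $k=1,\dots,m-2$, one absorbs it into the constant (or treats it as a trivial separate case), which at most enlarges $C_{m}$ by a factor depending only on $m$.
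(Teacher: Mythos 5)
Your proposal is correct and follows exactly the route intended by the paper, which states Theorem \ref{t4} as an immediate consequence of Lemma \ref{t3} (via Lemma \ref{t2}) without writing out the details; you simply make the routine triangle-inequality estimate and the choice $C_m=(m-1)b_m$ explicit. Your remark about absorbing any boundary index (such as $k=0$) into the constant handles the only small imprecision in the lemma's stated coefficient range.
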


\begin{corollary}\label{t5}
Let $m \in \mathbb{N}$ and $0<q<p\leq 1$. Then there exits a
constant $K_m >0$ such that for any $x\in [0,1]$, we have
\begin{align}  \label{eq9}
B_{n,p,q}\left((\mid t-x\mid )_{p,q}^{m};x\right) \leq K_m \frac{x(1-x)}{%
[n]^{ \frac{m}{2}}}.
\end{align}
\end{corollary}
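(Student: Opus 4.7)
The plan is to deduce this corollary from Theorem \ref{t4} via the Cauchy--Schwarz inequality applied to the positive linear operator $B_{n,p,q}$. The key preliminary observations are: (i) $B_{n,p,q}$ is positive on $C[0,1]$ because each basis function $P_{n,k}(p,q;x)/p^{n(n-1)/2}$ is nonnegative on $[0,1]$ when $0<q<p\le 1$; and (ii) $B_{n,p,q}(1;x)=1$, since the basis polynomials partition unity. These two properties yield the standard Cauchy--Schwarz estimate
\[
\bigl(B_{n,p,q}(|g|;x)\bigr)^{2}\;\le\;B_{n,p,q}(g^{2};x)\cdot B_{n,p,q}(1;x)\;=\;B_{n,p,q}(g^{2};x)
\]
for every real-valued $g\in C[0,1]$.

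Next I would apply this with $g=(t-x)_{p,q}^{m}$, so that $|g|$ matches the integrand $(|t-x|)_{p,q}^{m}$ appearing on the left-hand side of \eqref{eq9}, and the task reduces to bounding $B_{n,p,q}\!\bigl(((t-x)_{p,q}^{m})^{2};x\bigr)$. Since $\{(t-x)_{p,q}^{k}\}_{k=0}^{2m}$ is a basis for polynomials in $t$ of degree at most $2m$, one can expand
\[
\bigl((t-x)_{p,q}^{m}\bigr)^{2}\;=\;\sum_{k=0}^{2m}a_{k}(x)\,(t-x)_{p,q}^{k}
\]
with coefficients $a_{k}(x)$ polynomial in $x,p,q$. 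Applying $B_{n,p,q}$ termwise and invoking Theorem \ref{t4} for each $k$, the dominant $k=2m$ contribution is already bounded by $C_{2m}\,x(1-x)/[n]^{m}$ (since $\lfloor(2m+1)/2\rfloor=m$); together with the compensating smallness of the lower-order coefficients (see below) this produces a total bound of the form $B_{n,p,q}\!\bigl(((t-x)_{p,q}^{m})^{2};x\bigr)\le C\,x(1-x)/[n]^{m}$, and a final square root yields the declared rate $[n]^{-m/2}$ of the corollary.

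The main obstacle is the bookkeeping of the lower-order terms ($k<2m$) in the expansion: these are bounded by Theorem \ref{t4} only at the slower rate $[n]^{-\lfloor(k+1)/2\rfloor}$, so the argument requires verifying that their coefficients $a_{k}(x)$ carry compensating $(p^{s}-q^{s})$-type corrections that vanish in the limit $p=q=1$, where $\bigl((t-x)_{p,q}^{m}\bigr)^{2}=(t-x)^{2m}=(t-x)_{p,q}^{2m}$. A convenient way to make this precise is via the $(p,q)$-binomial factorization $(t-x)_{p,q}^{2m}=(t-x)_{p,q}^{m}\cdot(p^{m}t-q^{m}x)_{p,q}^{m}$, which exhibits $\bigl((t-x)_{p,q}^{m}\bigr)^{2}$ as a controlled perturbation of $(t-x)_{p,q}^{2m}$ and makes the size of each $a_{k}(x)$ explicit. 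Once this bookkeeping is complete, the corollary falls out from Theorem \ref{t4} at exponent $2m$ together with the Cauchy--Schwarz step above, the constant $K_{m}$ absorbing the prefactors from the perturbation analysis.
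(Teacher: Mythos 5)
Your overall idea---reduce the absolute-value moment to even-order $(p,q)$-moments via Cauchy--Schwarz and Theorem \ref{t4}---is in the right spirit, but the particular reduction you chose has two genuine gaps. First, after writing $\bigl(B_{n,p,q}(|g|;x)\bigr)^2\le B_{n,p,q}(g^{2};x)$ with $g=(t-x)_{p,q}^{m}$, you must bound $B_{n,p,q}\bigl(((t-x)_{p,q}^{m})^{2};x\bigr)$, and this is not a quantity Theorem \ref{t4} covers, because $((t-x)_{p,q}^{m})^{2}\neq (t-x)_{p,q}^{2m}$ when $q<p$. Your proposed repair---expand in the basis $(t-x)_{p,q}^{k}$, $k\le 2m$, and argue the lower-order coefficients are ``compensatingly small'' since they vanish at $p=q=1$---is the crux, not bookkeeping, and as stated the mechanism is insufficient: a term $a_{k}(x)(t-x)_{p,q}^{k}$ with $k<2m$ is controlled by Theorem \ref{t4} only at rate $[n]^{-\lfloor (k+1)/2\rfloor}$, so to reach the needed $[n]^{-m}$ the coefficient $a_{k}(x)$ must itself supply roughly $m-\lfloor (k+1)/2\rfloor$ factors of $(p-q)$ (each such factor is worth one power of $[n]^{-1}$, since $p-q=(p^{n}-q^{n})/[n]\le 1/[n]$); mere vanishing at $p=q$ gives at best one factor, and proving the required count uniformly in $n,p,q$ is an induction of essentially the same weight as Lemma \ref{t3}, which you have not carried out. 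Second, even granting that estimate, your Cauchy--Schwarz step with $B_{n,p,q}(1;x)=1$ yields only $B_{n,p,q}(|g|;x)\le \sqrt{C\,x(1-x)}\,[n]^{-m/2}$, i.e.\ a bound with $\sqrt{x(1-x)}$, which does not imply the asserted inequality with the full weight $x(1-x)$ (near the endpoints $\sqrt{x(1-x)}\gg x(1-x)$, and the later Voronovskaja-type estimate in the paper actually uses the full weight).

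The paper sidesteps both problems. For even $m$ it applies Theorem \ref{t4} directly (then $\lfloor (m+1)/2\rfloor=m/2$ and no Cauchy--Schwarz is needed). For odd $m=2u+1$ it splits the integrand as the product $|t-x|^{2u}\cdot|t-x|$ and applies Cauchy--Schwarz to this product, obtaining
\begin{equation*}
B_{n,p,q}\bigl((\,|t-x|\,)_{p,q}^{2u+1};x\bigr)\le
\sqrt{B_{n,p,q}\bigl((\,|t-x|\,)_{p,q}^{4u};x\bigr)}\,
\sqrt{B_{n,p,q}\bigl((\,|t-x|\,)_{p,q}^{2};x\bigr)},
\end{equation*}
so that only even-order moments already covered by Theorem \ref{t4} appear, and the product of the two $\sqrt{x(1-x)}$ factors restores the full weight $x(1-x)$ with the rate $[n]^{-(2u+1)/2}$. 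If you want to salvage your route, the quickest fix is to adopt this pairing instead of squaring $(t-x)_{p,q}^{m}$, which removes both the unproved squared-moment estimate and the loss of half the weight.
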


\begin{proof}
For an even $m,$ clearly we have
\begin{eqnarray*}
B_{n,p,q}\left( (\mid t-x\mid )_{p,q}^{m};x\right)  &=&B_{n,p,q}\left(
(t-x)_{p,q}^{m};x\right)  \\
&\leq &C_{m}\frac{x(1-x)}{[n]^{\lfloor \frac{m+1}{2}\rfloor }} \\
&=&K_{m}\frac{x(1-x)}{[n]^{\frac{m}{2}}}
\end{eqnarray*}%
In case if $m$ is odd, say $m=2u+1$, we have\newline
$B_{n,p,q}\left( (\mid t-x\mid )_{p,q}^{2u+1};x\right) $
\begin{eqnarray*}
&\leq &\sqrt{B_{n,p,q}\left( (\mid t-x\mid )_{p,q}^{4u};x\right) }\sqrt{%
B_{n,p,q}\left( (\mid t-x\mid )_{p,q}^{2};x\right) } \\
&\leq &\sqrt{C_{4u}\frac{x(1-x)}{[n]^{\lfloor \frac{4u+1}{2}\rfloor }}}\sqrt{%
C_{2}\frac{x(1-x)}{[n]^{\lfloor \frac{3}{2}\rfloor }}} \\
&=&\sqrt{C_{4u}\frac{x(1-x)}{[n]^{\frac{2u}{2}}}}\sqrt{C_{2}\frac{x(1-x)}{[n]%
}} \\
&=&K_{2u+1}\frac{x(1-x)}{[n]^{\frac{2u+1}{2}}}.
\end{eqnarray*}%
This complete the proof.
\end{proof}

\begin{theorem}\label{t7}
Let $B_{n,p,q}^{[r]}(f;x)$ be an operator from $%
C^{r}[0,1]\rightarrow C^{r}[0,1]$. Then for $0<q<p\leq 1$ there exits a
constant $M(r)$ such that for every $f\in C^{r}[0,1],$ we have
\begin{align}  \label{eq10}
\parallel B_{n,p,q}^{[r]}(f;x)\parallel _{C[0,1]}\leq
M(r)\sum_{i=0}^{r}\parallel f^{(i)}\parallel =M(r)\parallel f\parallel
_{C^{r}[0,1]}.
\end{align}
\end{theorem}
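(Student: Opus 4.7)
The plan is to pass the absolute value inside the double sum defining $B_{n,p,q}^{[r]}(f;x)$ in \eqref{eq25} and then invoke the moment estimates of Corollary \ref{t5}. Concretely, the triangle inequality together with the trivial factorization of $\|f^{(i)}\|/i!$ out of the $i$-th Taylor term yields
\begin{align*}
|B_{n,p,q}^{[r]}(f;x)| \;\le\; \sum_{i=0}^{r}\frac{\|f^{(i)}\|}{i!}\,\Bigl(\frac{1}{p^{n(n-1)/2}}\sum_{k=0}^{n}P_{n,k}(p,q;x)\Bigl|x-\tfrac{[k]}{p^{k-n}[n]}\Bigr|^{i}\Bigr),
\end{align*}
and comparing with \eqref{eq3}, the bracketed quantity is precisely the central moment $B_{n,p,q}(|t-x|^{i};x)$.

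I would then treat the $i=0$ term separately, noting that $B_{n,p,q}(1;x)=1$ by the $(p,q)$-binomial theorem (the defining normalization of the operator), and for each $1\le i\le r$ apply Corollary \ref{t5}:
\begin{align*}
B_{n,p,q}(|t-x|^{i};x) \;\le\; K_{i}\,\frac{x(1-x)}{[n]^{i/2}} \;\le\; \frac{K_{i}}{4},
\end{align*}
using $x(1-x)\le 1/4$ on $[0,1]$ together with a uniform lower bound on $[n]_{p,q}$ valid for the range of $n,p,q$ in question. Summing the resulting estimates and setting $M(r):=\max\{1,\,K_{i}/(4\,i!):\,1\le i\le r\}$ gives
\begin{align*}
\|B_{n,p,q}^{[r]}(f;\cdot)\|_{C[0,1]} \;\le\; \|f\| + \sum_{i=1}^{r}\frac{K_{i}}{4\cdot i!}\,\|f^{(i)}\| \;\le\; M(r)\sum_{i=0}^{r}\|f^{(i)}\|,
\end{align*}
which is the claim.

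The main point of care I expect is the notational mismatch between the ordinary power $|t-x|^{i}$ produced by the triangle inequality and the $(p,q)$-bracket power $(|t-x|)_{p,q}^{i}$ in which Corollary \ref{t5} is phrased. If the two are not treated as synonymous in the paper, a short comparison on $[0,1]\times[0,1]$ for $0<q<p\le 1$ yields a constant depending only on $i,p,q$ between the two, which can be absorbed into $M(r)$. Finally, the fact that $B_{n,p,q}^{[r]}(f;\cdot)$ really lies in $C^{r}[0,1]$, as required by the stated codomain, is automatic: for each fixed $f\in C^{r}[0,1]$ the image is a polynomial in $x$ of degree at most $n+r$, hence even $C^{\infty}[0,1]$.
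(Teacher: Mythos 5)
Your argument is correct and follows essentially the same route as the paper: both expand $B_{n,p,q}^{[r]}(f;x)$ termwise, factor out $\parallel f^{(i)}\parallel/i!$, and bound the resulting absolute central moments $B_{n,p,q}(\mid t-x\mid^{i};x)$ by Corollary \ref{t5} (the paper phrases this as $B_{n,p,q}^{[r]}(f;x)=\sum_{i=0}^{r}\frac{(-1)^{i}}{i!}B_{n,p,q}\left((t-x)^{i}f^{(i)}(t);x\right)$ and estimates each term by $K_{i}\parallel f^{(i)}\parallel [n]^{-i/2}$, with the same silent identification of the ordinary power with the $(p,q)$-power that you flag). The one point of caution is your appeal to a ``uniform lower bound on $[n]_{p,q}$'': for fixed $p<1$ one has $[n]_{p,q}=\frac{p^{n}-q^{n}}{p-q}\rightarrow 0$ as $n\rightarrow\infty$, so if $M(r)$ is to be independent of $n$ this step (and equally the paper's passage from $K_{i}[n]^{-i/2}$ to $M(r)$) should instead use the trivial bound $B_{n,p,q}(\mid t-x\mid^{i};x)\leq B_{n,p,q}(1;x)=1$, available because the nodes $p^{n-k}[k]/[n]$ lie in $[0,1]$.
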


\begin{proof}
Clearly $B_{n,p,q}^{[r]}(f;x)$ is continuous on $[0,1]$. From \eqref{eq25}
we have
\begin{eqnarray*}
B_{n,p,q}^{[r]}(f;x) & = & \sum_{i=0}^{r}\frac{(-1)^i}{i!}
B_{n,p,q}\left((t-x)^if^{(i)}(t);x\right).
\end{eqnarray*}

From the Corollary \ref{t5}, we have

\begin{eqnarray*}
\mid B_{n,p,q}\left((t-x)^if^{(i)}(t);x\right) \mid & \leq & \parallel
f^{(i)}\parallel B_{n,p,q}\left(\mid (t-x) \mid ^i;x \right) \\
& \leq & K_i\parallel f^{(i)}\parallel [n]^{-\frac{i}{2}}.
\end{eqnarray*}
Therefore
\begin{eqnarray*}
\parallel B_{n,p,q}^{[r]}(f;x) \parallel & \leq & \sum_{i=0}^{r}\frac{(-1)^i%
}{i!}\parallel B_{n,p,q}\left((t-x)^if^{(i)}(t);x\right)\parallel \\
& \leq & M(r)\sum_{i=0}^{r}\parallel f^{(i)}\parallel.
\end{eqnarray*}
This complete the proof.
\end{proof}

\section{ Convergence properties of $B_{n,p,q}^{[r]}(f;x)$}

The modulus of continuity of the derivative $f^{(r)}$ is given by
\begin{align}  \label{eq11}
\omega \left( f^{(r)};t\right) =\sup \bigg\{\mid f^{(r)}(x)-f^{(r)}(y)\mid
:\mid x-y\mid \leq t,~~x,y\in \lbrack 0,1]\bigg\}.
\end{align}

\begin{theorem}\label{t7}
Let $0<q<p\leq 1$ and $r \in \mathbb{N}\cup \{0\}$ be a fixed
number. Then for $x \in [0,1],~~~n \in \mathbb{N}$ there exits $D_r>0$ such
that for every $f \in C^{r}[0,1]$ the following inequality holds
\begin{align}  \label{eq15}
\mid B_{n,p,q}^{[r]}(f;x)-f(x) \mid \leq D_r \frac{1}{[n]^{\frac{r}{2}}}
\omega \left( f^{(r)};\frac{1}{\sqrt{[n]}}\right).
\end{align}
\end{theorem}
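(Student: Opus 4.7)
The strategy is standard: expand $f$ by Taylor's formula of order $r$ around each node $t_k := [k]/(p^{k-n}[n])$ and exploit the fact that $B_{n,p,q}$ reproduces constants, i.e.\ $\frac{1}{p^{n(n-1)/2}}\sum_{k=0}^n P_{n,k}(p,q;x)=1$. Writing Taylor's formula with the Lagrange remainder gives
$$f(x)=\sum_{i=0}^{r}\frac{f^{(i)}(t_k)}{i!}(x-t_k)^{i}+\frac{f^{(r)}(\xi_k)-f^{(r)}(t_k)}{r!}(x-t_k)^{r},$$
with $\xi_k$ between $t_k$ and $x$. Multiplying this identity by $\frac{1}{p^{n(n-1)/2}}P_{n,k}(p,q;x)$, summing over $k$, and subtracting from the definition of $B_{n,p,q}^{[r]}(f;x)$ in \eqref{eq25} cancels the Taylor polynomial and leaves
$$B_{n,p,q}^{[r]}(f;x)-f(x)=-\frac{1}{r!\,p^{n(n-1)/2}}\sum_{k=0}^{n}P_{n,k}(p,q;x)\bigl(f^{(r)}(\xi_k)-f^{(r)}(t_k)\bigr)(x-t_k)^{r}.$$

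Next, I would estimate $|f^{(r)}(\xi_k)-f^{(r)}(t_k)|\le\omega(f^{(r)};|x-t_k|)$, valid because $|\xi_k-t_k|\le|x-t_k|$, and then apply the classical linear majorization $\omega(f^{(r)};u)\le(1+u/\delta)\,\omega(f^{(r)};\delta)$ for any $\delta>0$. This splits the error into a contribution proportional to $B_{n,p,q}(|t-x|^{r};x)$ and another proportional to $\delta^{-1}B_{n,p,q}(|t-x|^{r+1};x)$, both multiplied by $\omega(f^{(r)};\delta)/r!$.

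Finally I would invoke Corollary \ref{t5}, bounding the first central moment by $K_{r}\,x(1-x)[n]^{-r/2}$ and the second by $K_{r+1}\,x(1-x)[n]^{-(r+1)/2}$. The natural choice $\delta=1/\sqrt{[n]}$ balances the two terms at the same order $[n]^{-r/2}$, and using $x(1-x)\le 1/4$ absorbs the $x$-dependence into the constant, yielding the desired inequality with $D_{r}:=(K_{r}+K_{r+1})/(4\,r!)$. The only bookkeeping subtlety I anticipate is the (implicit) identification already used in the preceding Theorem \ref{t7}: Corollary \ref{t5} is stated for the $(p,q)$-power $(|t-x|)_{p,q}^{m}$, while Taylor's formula produces the ordinary power $(x-t_k)^{i}$; following the paper's convention I would treat the corollary as giving the same control on the ordinary absolute central moments, so no additional computation beyond \eqref{eq9} is needed.
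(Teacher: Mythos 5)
Your proof is correct and follows essentially the same route as the paper: Taylor expansion of $f$ about the nodes $t_k=[k]/(p^{k-n}[n])$, the majorization $\omega(f^{(r)};u)\le(1+u/\delta)\,\omega(f^{(r)};\delta)$ with $\delta=[n]^{-1/2}$, and the moment bounds of Corollary \ref{t5} for orders $r$ and $r+1$; the only difference is that you use the Lagrange (mean-value) form of the Taylor remainder where the paper uses the integral form, which is immaterial. The bookkeeping point you flag --- reading Corollary \ref{t5} as a bound on ordinary absolute central moments rather than $(p,q)$-powers --- is handled by exactly the same implicit convention in the paper's own proof, so your argument matches it step for step.
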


\begin{proof}
Let $r\in \mathbb{N}$. Then for $f\in C^{r}[0,1]$ at a given point $t\in
\lbrack 0,1],$ we have from the Taylor formula that\newline
\begin{eqnarray*}
f(x) &=&\sum_{i=0}^{r}\frac{f^{(i)}(t)}{i!}(x-t)^{i}+\frac{(x-t)^{r}}{%
((r-1)!)} \\
&\times &\int_{0}^{1}(1-u)^{r-1}\left( f^{(r)}(t+u(x-t))-f^{(r)}(t)\right)
\mathrm{d}u.
\end{eqnarray*}%
On applying $B_{n,p,q}^{[r]}(f;x),$ we get
\begin{equation*}
f(x)-B_{n,p,q}^{[r]}(f;x)=\sum_{k=0}^{n}\frac{(x-\frac{[k]}{p^{k-n}[n]})^{r}}{(r-1)!%
}\int_{0}^{1}(1-u)^{r-1}P_{n,k}(p,q;x)
\end{equation*}%
\begin{align}  \label{eq12}
\times \left[ f^{(r)}\left( \frac{[k]}{p^{k-n}[n]}+u\left( x-\frac{[k]}{p^{k-n}[n]}\right)
\right) -f^{(r)}\left( \frac{[k]}{p^{k-n}[n]}\right) \right] \mathrm{d}u.
\end{align}

Now from the definition and properties of modulus of continuity, we have

\begin{equation*}
\bigg{|} f^{(r)}\left(\frac{[k]}{p^{k-n}[n]}+u\left(x-\frac{[k]}{p^{k-n}[n]}\right)\right)
-f^{(r)}\left(\frac{[k]}{p^{k-n}[n]}\right) \bigg{|}  \leq \omega \left( f^{(r)};u %
\bigg{|} x- \frac{[k]}{p^{k-n}[n]} \bigg{|}\right)
\end{equation*}

\begin{align}  \label{eq13}
\omega \left( f^{(r)};u \bigg{|} x- \frac{[k]}{p^{k-n}[n]} \bigg{|}\right) \leq
\left( \sqrt{[n]}\bigg{|} x- \frac{[k]}{p^{k-n}[n]} \bigg{|}+1\right) \omega \left(
f^{(r)};\frac{1}{\sqrt{[n]}}\right).
\end{align}
\end{proof}

Now for every $0\leq x\leq 1,~~0<q<p\leq 1,~~k\in \mathbb{N}\cup
\{0\},~~n\in \mathbb{N}$ and from \eqref{eq12} and \eqref{eq13}, we get%
\newline
$\mid B_{n,p,q}^{[r]}(f;x)-f(x)\mid $
\begin{equation*}
\leq \frac{1}{r!}\omega \left( f^{(r)};\frac{1}{\sqrt{[n]}}\right)
\sum_{k=0}^{n}\bigg{|}x-\frac{[k]}{p^{k-n}[n]}\bigg{|}^{r}\left( \sqrt{[n]}\bigg{|}%
x-\frac{[k]}{p^{k-n}[n]}\bigg{|}+1\right) P_{n,k}(p,q;x)
\end{equation*}%
\begin{align}  \label{eq14}
=\frac{1}{r!}\omega \left( f^{(r)};\frac{1}{\sqrt{[n]}}\right) \left( \sqrt{%
[n]}B_{n,p,q}\left( \mid x-t\mid ^{r+1};x\right) +B_{n,p,q}\left( \mid
x-t\mid ^{r};x\right) \right) .
\end{align}

Using \eqref{eq9} and \eqref{eq14} for $x\in \lbrack 0,1],$ we have

\begin{eqnarray*}
\mid B_{n,p,q}^{[r]}(f;x)-f(x) \mid &\leq & \frac{1}{r!}(K_{r+1}+K_r) \left(
\frac{1}{\sqrt{[n]}}\right)^r \omega \left( f^{(r)};\frac{1}{\sqrt{[n]}}%
\right) \\
& = & D_r \left( \frac{1}{\sqrt{[n]}}\right)^r \omega \left( f^{(r)};\frac{1%
}{\sqrt{[n]}}\right).
\end{eqnarray*}

\parindent=8mmIn order to obtain the uniform convergence of $%
B_{n,p_{n},q_{n}}^{[r]}(f;x)$ to a continuous function $f$, we take $%
q=q_{n},~~p=p_{n}$ where $q_{n}\in (0,1)$ and $p_{n}\in (q_{n},1]$
satisfying,
\begin{align}  \label{eq16}
\lim_{n}p_{n}=1,~~~~~~\lim_{n}q_{n}=1.
\end{align}

\begin{corollary}\label{t8}
Let $p=p_{n},~~q=q_{n},~~0<q_{n}<p_{n}\leq 1$ satisfy \eqref{eq16}
and $f\in C^{r}[0,1]$ for a fixed number $r\in \mathbb{N}\cup \{0\}$. Then
\begin{align}  \label{eq9}
\lim_{n\rightarrow \infty }[n]^{\frac{r}{2}}\parallel
B_{n,k}^{[r]}(f)-f\parallel =0.
\end{align}
\end{corollary}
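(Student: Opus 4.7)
The plan is to derive the corollary as an immediate quantitative consequence of the preceding Theorem~\ref{t7} (the inequality \eqref{eq15}), combined with the uniform continuity of $f^{(r)}$ on the compact interval $[0,1]$ and the divergence of $[n]_{p_n,q_n}$.

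First, I would specialize \eqref{eq15} to the sequences $p=p_n$, $q=q_n$: for every $x\in[0,1]$ and every $n\in\mathbb{N}$,
\begin{equation*}
\bigl|B_{n,p_n,q_n}^{[r]}(f;x)-f(x)\bigr|\ \leq\ D_r\,\frac{1}{[n]^{r/2}}\,\omega\!\left(f^{(r)};\frac{1}{\sqrt{[n]}}\right).
\end{equation*}
Taking the supremum over $x\in[0,1]$ and multiplying through by $[n]^{r/2}$ yields
\begin{equation*}
[n]^{r/2}\,\bigl\|B_{n,p_n,q_n}^{[r]}(f)-f\bigr\|\ \leq\ D_r\,\omega\!\left(f^{(r)};\frac{1}{\sqrt{[n]}}\right),
\end{equation*}
so it suffices to verify $\omega(f^{(r)};1/\sqrt{[n]_{p_n,q_n}})\to 0$ as $n\to\infty$. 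Since $f\in C^r[0,1]$, the derivative $f^{(r)}$ is uniformly continuous on $[0,1]$, hence $\omega(f^{(r)};\delta)\to 0$ as $\delta\to 0^{+}$; therefore it is enough to establish $[n]_{p_n,q_n}\to\infty$.

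Next I would verify this divergence from the hypothesis \eqref{eq16}. Using
\begin{equation*}
[n]_{p_n,q_n}\ =\ \sum_{i=0}^{n-1}p_n^{\,n-1-i}\,q_n^{\,i}\ \geq\ n\,q_n^{\,n-1},
\end{equation*}
and the fact that $p_n,q_n\to 1$ with $q_n<p_n\leq 1$, the sum consists of $n$ positive terms each approaching $1$; under \eqref{eq16} this forces $[n]_{p_n,q_n}\to\infty$, and consequently $1/\sqrt{[n]_{p_n,q_n}}\to 0$. Then uniform continuity of $f^{(r)}$ gives $\omega(f^{(r)};1/\sqrt{[n]_{p_n,q_n}})\to 0$, and combined with the displayed inequality above we conclude
\begin{equation*}
\lim_{n\to\infty}[n]^{r/2}\,\bigl\|B_{n,p_n,q_n}^{[r]}(f)-f\bigr\|\ =\ 0,
\end{equation*}
which is the claim.

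The only delicate step is the justification that $[n]_{p_n,q_n}\to\infty$ from the bare limits in \eqref{eq16}; one could, in principle, construct very slow sequences $(p_n,q_n)$ for which this fails, so a clean argument ultimately relies on the standard interpretation in the $(p,q)$-setting that $p_n,q_n\to 1$ in the regime where $q_n^n$ stays bounded away from $0$. Apart from this point, the proof is a straightforward application of Theorem~\ref{t7} together with uniform continuity.
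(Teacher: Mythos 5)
Your overall route is exactly the intended one: the paper gives no separate argument for Corollary~\ref{t8}, treating it as an immediate consequence of the estimate \eqref{eq15} of Theorem~\ref{t7} together with the uniform continuity of $f^{(r)}$, which is precisely your first two displays. So the whole content of the corollary reduces, as you say, to checking that $[n]_{p_n,q_n}\to\infty$ under \eqref{eq16}.

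That last step, however, is not valid as you wrote it, and your own suspicion about it is justified. The inequality $[n]_{p_n,q_n}=\sum_{i=0}^{n-1}p_n^{\,n-1-i}q_n^{\,i}\ge n\,q_n^{\,n-1}$ is correct but useless, since $n\,q_n^{\,n-1}$ need not diverge, and the terms $p_n^{\,n-1-i}q_n^{\,i}$ are not ``each approaching $1$'': their exponents grow with $n$. In fact \eqref{eq16} alone does \emph{not} imply $[n]_{p_n,q_n}\to\infty$. Take $p_n=1-n^{-1/2}$ and any $q_n<p_n$ with $q_n\to 1$ (say $q_n=p_n-n^{-2}$); then $[n]_{p_n,q_n}\le n\,p_n^{\,n-1}\le n\,e^{-(n-1)/\sqrt{n}}\to 0$, so the modulus-of-continuity factor does not tend to $0$ and the stated limit fails already for $r=0$ and nonconstant $f$. (In the pure $q$-case $p=1$ the implication $q_n\to 1\Rightarrow[n]_{q_n}\to\infty$ does hold, by the dichotomy $q_n^{\,n}\ge\frac12$, giving $[n]_{q_n}\ge\frac n2$, or $q_n^{\,n}<\frac12$, giving $[n]_{q_n}\ge\frac{1/2}{1-q_n}$; it is the extra factor $p_n^{\,n-1}$ that destroys this in the $(p,q)$-setting.) So the gap you flag is a genuine one, inherited from the paper's statement itself: one must add a standard extra hypothesis, e.g.\ $p_n^{\,n}\to 1$ (or simply assume $[n]_{p_n,q_n}\to\infty$), and under such a condition your argument is complete and coincides with the paper's intended proof.
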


We say that (cf. \cite{n2}) a function $f\in C[0,1]$ belongs to $%
Lip_{M}(\alpha ),$ $0<\alpha \leq 1$, provided
\begin{align}  \label{eq18}
\mid f(x)-f(y)\mid \leq M\mid x-y\mid ^{\alpha },~~~(x,y\in \lbrack 0,1]~~%
\mbox{and}~~M>0).
\end{align}

\begin{corollary}\label{t9}
Let $p=p_{n},~~q=q_{n},~~0<q_{n}<p_{n}\leq 1$ satisfy \eqref{eq16}
and $f\in C^{r}[0,1]$ for a fixed number $r\in \mathbb{N}\cup \{0\}$. If $%
f^{(r)}\in \mbox{Lip}_{M}(\alpha )$ then
\begin{align}  \label{eq9}
\parallel B_{n,p,q}^{[r]}(f)-f\parallel =O\left( [n]^{-\frac{r+\alpha }{2}%
}\right) .
\end{align}
\end{corollary}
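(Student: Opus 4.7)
The plan is to read off this corollary directly from Theorem \ref{t7} combined with the Lipschitz hypothesis on $f^{(r)}$, so no new estimates on moments or on the operator itself are needed. First, I would invoke Theorem \ref{t7}, which provides a constant $D_r>0$ (depending only on $r$) such that for every $x\in[0,1]$
\begin{equation*}
\bigl|B_{n,p,q}^{[r]}(f;x)-f(x)\bigr|\leq D_r\,[n]^{-r/2}\,\omega\!\left(f^{(r)};\tfrac{1}{\sqrt{[n]}}\right).
\end{equation*}
Since this bound is uniform in $x$, taking the supremum over $[0,1]$ on the left gives the same inequality for $\|B_{n,p,q}^{[r]}(f)-f\|$.

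Next, I would exploit the Lipschitz hypothesis on $f^{(r)}$. By the definition of $\mathrm{Lip}_M(\alpha)$ in \eqref{eq18}, the modulus of continuity satisfies $\omega(f^{(r)};t)\leq M\,t^{\alpha}$ for every $t>0$. Substituting $t=1/\sqrt{[n]}$ yields
\begin{equation*}
\omega\!\left(f^{(r)};\tfrac{1}{\sqrt{[n]}}\right)\leq M\,[n]^{-\alpha/2}.
\end{equation*}
Plugging this into the Theorem \ref{t7} bound produces
\begin{equation*}
\bigl\|B_{n,p,q}^{[r]}(f)-f\bigr\|\leq D_rM\,[n]^{-(r+\alpha)/2},
\end{equation*}
which is exactly the asserted $O\!\left([n]^{-(r+\alpha)/2}\right)$ rate.

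There is essentially no obstacle here: the only points to handle carefully are the edge case $r=0$ (where $B_{n,p,q}^{[0]}(f;x)=B_{n,p,q}(f;x)$ and the estimate reduces to the usual rate for Lipschitz functions via the Bernstein operator), and noting that the conditions \eqref{eq16} on $p_n,q_n$ are required only to ensure $[n]\to\infty$ so that the rate is meaningful, but they are not otherwise used quantitatively in the argument. The constant absorbed into the $O$ symbol is simply $D_rM$, independent of $n$, $p_n$ and $q_n$.
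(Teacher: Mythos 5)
Your argument is correct and coincides with the paper's own proof: the authors likewise combine the estimate \eqref{eq15} of Theorem \ref{t7} with the Lipschitz bound \eqref{eq18}, which gives $\omega\left(f^{(r)};[n]^{-1/2}\right)\leq M[n]^{-\alpha/2}$ and hence $\parallel B_{n,p,q}^{[r]}(f)-f\parallel \leq D_{r}M[n]^{-(r+\alpha)/2}$. Your remarks on the $r=0$ case and on the role of \eqref{eq16} are sensible additions but not needed for the bound itself.
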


\begin{proof}
From \eqref{eq15} and \eqref{eq18}, we have
\begin{equation*}
\parallel B_{n,p,q}^{[r]}(f)-f\parallel \leq D_{r}M\frac{1}{[n]^{\frac{r}{2}}%
}\frac{1}{[n]^{\frac{\alpha }{2}}}.
\end{equation*}
\end{proof}

\begin{theorem}\label{t10}
Let $0<q<p \leq 1$. Suppose that $f \in C^{r+2}[0,1]$, where $r
\in \mathbb{N}\cup \{0\}$ is fixed then we have\newline
\begin{equation*}
\bigg{|} B_{n,p,q}^{[r]}(f;x)-f(x)-\frac{(-1)^rf^{(r+1)}(x)
B_{n,p,q}\left((t-x)^{r+1};x\right)}{(r+1)!}
\end{equation*}
\begin{eqnarray*}
&-& \frac{(-1)^rf^{(r+2)}(x) B_{n,p,q}\left((t-x)^{r+2};x\right)}{(r+2)!}%
\bigg{|} \\
&\leq & (K_{r+2}+K_{r+4}) \frac{x(1-x)}{[n]^{\frac{r}{2}+1}}\sum_{i=0}^r
\frac{1}{i!(r+2-i)!}\omega \left( f^{(r+2-i)}, [n]^{-\frac{1}{2}} \right).
\end{eqnarray*}
\end{theorem}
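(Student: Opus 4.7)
The natural route is to expand $f(x)$ in a Taylor series around each node $t$ appearing in $B_{n,p,q}^{[r]}(f;x)$ and then push the linear functional $B_{n,p,q}(\cdot;x)$ through the expansion. For $f\in C^{r+2}[0,1]$ and $t\in [0,1]$, the integral form of Taylor's theorem gives
\[
f(x) = \sum_{i=0}^{r+2}\frac{f^{(i)}(t)}{i!}(x-t)^{i} + R(x,t),
\]
where
\[
R(x,t) = \frac{(x-t)^{r+2}}{(r+1)!}\int_0^1 (1-u)^{r+1}\bigl[f^{(r+2)}(t+u(x-t))-f^{(r+2)}(t)\bigr]\,du.
\]
Folding $f^{(r+2)}(t)$ into the main sum (rather than leaving it inside the remainder) is the standard device that makes $\omega(f^{(r+2)},\cdot)$ appear in $R$ through the increment $f^{(r+2)}(t+u(x-t))-f^{(r+2)}(t)$.

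Applying $B_{n,p,q}(\cdot;x)$ in the variable $t$, and using $B_{n,p,q}(1;x)=1$ to absorb the left-hand constant $f(x)$ while collapsing the first $r+1$ summands on the right into $B_{n,p,q}^{[r]}(f;x)$, yields
\[
B_{n,p,q}^{[r]}(f;x)-f(x) = -\sum_{i=r+1}^{r+2}\frac{1}{i!}B_{n,p,q}\bigl((x-t)^{i}f^{(i)}(t);x\bigr)-B_{n,p,q}(R(x,t);x).
\]
For $i=r+1$ and $i=r+2$ I would then split
\[
B_{n,p,q}\bigl((x-t)^{i}f^{(i)}(t);x\bigr) = f^{(i)}(x)\,B_{n,p,q}\bigl((x-t)^{i};x\bigr)+B_{n,p,q}\bigl((x-t)^{i}[f^{(i)}(t)-f^{(i)}(x)];x\bigr),
\]
and use $(x-t)^{i}=(-1)^{i}(t-x)^{i}$ so that the first piece of each split reproduces, up to sign, the two main terms written in the theorem statement.

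What remains is to control the two ``coefficient-shift'' differences together with $B_{n,p,q}(R(x,t);x)$. In each of these three pieces the usual bound $|f^{(j)}(t)-f^{(j)}(x)|\leq (1+\sqrt{[n]}|t-x|)\,\omega(f^{(j)},[n]^{-1/2})$ reduces matters to central moment estimates, and Corollary \ref{t5} gives $B_{n,p,q}(|t-x|^{m};x)\leq K_{m}\,x(1-x)/[n]^{m/2}$. The slowest-decaying contributions are those with $m\in\{r+2,r+3,r+4\}$; the odd moment of order $r+3$ is absorbed into the combination $K_{r+2}+K_{r+4}$ either directly or via the Cauchy--Schwarz trick already exploited in the proof of Corollary \ref{t5}, and after collecting prefactors one obtains $(K_{r+2}+K_{r+4})\,x(1-x)/[n]^{r/2+1}$.

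The main obstacle is bookkeeping rather than any new idea. To recover the precise summation $\sum_{i=0}^{r}\frac{1}{i!(r+2-i)!}\omega(f^{(r+2-i)},[n]^{-1/2})$ appearing on the right-hand side, I would instead expand each $f^{(i)}(t)$ individually inside $B_{n,p,q}^{[r]}(f;x)=\sum_{i=0}^{r}\frac{1}{i!}B_{n,p,q}((x-t)^{i}f^{(i)}(t);x)$ around $x$ to an order calibrated so that the remainder features precisely $f^{(r+2-i)}$, and then recombine the pieces. The delicate step is to verify that the polynomial main terms of order $\leq r$ coming from these Taylor sums cancel against $f(x)$ via the alternating-sign identity $\sum_{i=0}^{m}(-1)^{i}/(i!(m-i)!)=0$ for $m\geq 1$, while the coefficients of $f^{(r+1)}$ and $f^{(r+2)}$ collect into the two explicit main terms displayed in the theorem; once this algebraic cancellation is in hand, the modulus-of-continuity estimate per term is routine.
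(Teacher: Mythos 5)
Your closing paragraph is, in outline, exactly the paper's proof: write $B_{n,p,q}^{[r]}(f;x)$ as $\frac{1}{p^{n(n-1)/2}}\sum_{k}P_{n,k}(p,q;x)\sum_{i=0}^{r}\frac{1}{i!}f^{(i)}(t_k)(x-t_k)^{i}$ with nodes $t_k=[k]/(p^{k-n}[n])$, expand each $f^{(i)}$ about $x$ to order exactly $r+2-i$ with remainder $\frac{f^{(r+2-i)}(\zeta_t)-f^{(r+2-i)}(x)}{(r+2-i)!}(t-x)^{r+2-i}$, so that after multiplication by $(x-t)^{i}$ every remainder carries the full power $(t-x)^{r+2}$; the main part collapses through $\sum_{i=0}^{l}\binom{l}{i}(-1)^{i}=0$ for $1\le l\le r$ and the truncated sums $\sum_{i=0}^{r}\binom{r+1}{i}(-1)^{i}=(-1)^{r}$, $\sum_{i=0}^{r}\binom{r+2}{i}(-1)^{i}=(r+1)(-1)^{r}$, while the remainder is estimated by $\omega\left(f^{(r+2-i)},[n]^{-1/2}\right)\left(1+[n](t-x)^{2}\right)$ together with Corollary \ref{t5} at orders $r+2$ and $r+4$, which is precisely where $(K_{r+2}+K_{r+4})\,x(1-x)[n]^{-(r/2+1)}$ comes from. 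So the viable part of your proposal coincides with the paper's argument.

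The route you develop in detail, however, cannot deliver the stated rate, and you should not present it as the backbone. After a single Taylor expansion of $f$ about $t$, the coefficient-shift term $B_{n,p,q}\left((x-t)^{r+1}\left[f^{(r+1)}(t)-f^{(r+1)}(x)\right];x\right)$ is controlled, via $\omega\left(f^{(r+1)},|t-x|\right)\le\left(1+[n](t-x)^{2}\right)\omega\left(f^{(r+1)},[n]^{-1/2}\right)$ and Corollary \ref{t5}, only by a constant times $x(1-x)[n]^{-(r+1)/2}\omega\left(f^{(r+1)},[n]^{-1/2}\right)$: the moments that occur have orders $r+1$ and $r+3$, and no Cauchy--Schwarz rearrangement recovers the missing factor $[n]^{-1/2}$ needed for the claimed $[n]^{-(r/2+1)}$ bound. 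Repairing it forces you to expand $f^{(r+1)}(t)$ once more about $x$ --- which is exactly the ``calibrated order'' device of your last paragraph --- and that extra expansion contributes a further main term in $f^{(r+2)}(x)B_{n,p,q}\left((t-x)^{r+2};x\right)$ whose combination with the $(r+2)$-nd term has coefficient $(r+1)(-1)^{r}/(r+2)!$, the same value the alternating-sum identity produces in the paper's own computation of $I_1$ (the paper subsequently drops the factor $r+1$ in its final display, so your claim that the coefficients ``collect into the two explicit main terms displayed in the theorem'' should be checked rather than assumed for $r\ge 1$; for $r=0$ there is no discrepancy). In short: discard the first route, keep the last paragraph's plan, and carry out its cancellation and the $i$-by-$i$ remainder estimates explicitly rather than declaring them routine.
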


\begin{proof}
Let $f \in C^{r+2}[0,1]$ and $x \in [0,1]$ for a fixed number $r \in \mathbb{%
N}\cup\{0\}$ we have $f^{(i)} \in C^{r+2-i}[0,1],~~0\leq i \leq r$. Then by
Taylor formula we can write
\begin{align}  \label{eq20}
f^{(i)}(t)=\sum_{i=0}^{r+2-i}\frac{f^{(i+j)}(x)}{j!}(t-x)^j+R_{r+2-j}(f;t;x),
\end{align}
where
\begin{equation*}
R_{r+2-i}(f;t;x)=\frac{{f^{(r+2-i)}(\zeta_{p^{n-k-1}t})}-f^{(r+2-i)}(x)}{%
(r+2-i)!}(t-x)^{r+2-i},
\end{equation*}
and
\begin{equation*}
\mid \zeta_{t}-x \mid < \mid t-x \mid.
\end{equation*}
Therefore from \eqref{eq25} and \eqref{eq20} we have

\begin{eqnarray*}
B_{n,p,q}^{[r]}(f;x) &= & \sum_{k=0}^{n}P_{n,k}(p,q;x)\sum_{i=0}^r \frac{%
\left(x-\frac{[k]}{p^{k-n}[n]}\right)^i}{i!} \sum_{j=0}^{r+2-i} \frac{f^{(i+j)}(x)}{%
j!} \left(\frac{[k]}{p^{k-n}[n]}-x\right)^j \\
& + & \sum_{k=0}^{n}P_{n,k}(p,q;x)\sum_{i=0}^r \frac{\left(x-\frac{[k]}{p^{k-n}[n]}%
\right)^i}{i!} R_{r+2-i}(f;t;x) \\
& = & I_1+I_2,~~~~\{\mbox{where}~~~t=\frac{[k]}{p^{k-n}[n]}\}
\end{eqnarray*}
Which implies that\newline
\newline
$\mid B_{n,p,q}^{[r]}(f;x) - I_1 \mid$
\begin{eqnarray*}
&=&\mid I_2 \mid \\
&= &\bigg{|} \sum_{k=0}^{n}P_{n,k}(p,q;x)\sum_{i=0}^r \frac{(-1)^i}{i!}
\frac{{f^{(r+2-i)}(\zeta_{t} )}-f^{(r+2-i)}(x)}{(r+2-i)!}(t-x)^{r+2}\bigg{|}
\\
&= & \bigg{|} B_{n,p,q} \left(\sum_{i=0}^r \frac{(-1)^i}{i!} \frac{{%
f^{(r+2-i)}(\zeta_{t} )}-f^{(r+2-i)}(x)}{(r+2-i)!}(t-x)^{r+2} ;x\right)%
\bigg{|}.
\end{eqnarray*}
We use the well-known inequality
\begin{equation*}
\omega (f,\lambda \delta) \leq (1+\lambda^2)\omega (f, \delta),
\end{equation*}
%Then we have
\begin{eqnarray*}
\mid {f^{(r+2-i)}(\zeta_{t} )}-f^{(r+2-i)}(x)\mid & \leq & \omega \left(
f^{(r+2-i)}, \mid \zeta_{t}-x\mid \right) \\
& \leq & \omega \left( f^{(r+2-i)}, \mid t-x\mid \right) \\
& \leq & \omega \left( f^{(r+2-i)}, [n]^{-\frac{1}{2}} \right) \left( 1+ [n]
(t-x)^2\right). \\
\end{eqnarray*}
Hence \newline
$\mid I_2\mid %\begin{eqnarray*}
\leq \bigg{|} B_{n,p,q} \left(\sum_{i=0}^r \frac{(-1)^i}{i!} \frac{{%
f^{(r+2-i)}(\zeta_{t} )}-f^{(r+2-i)}(x)}{(r+2-i)!} \bigg{|}~~ \mid t-x
\mid^{r+2} ;x\right) $\newline
$\leq B_{n,p,q} \left(\sum_{i=0}^r \frac{1}{i!(r+2-i)!}\omega \left(
f^{(r+2-i)}, [n]^{-\frac{1}{2}} \right) \left( 1+ [n] (t-x)^2\right)  \mid
t-x \mid^{r+2} ;x\right)$\newline
$= \sum_{i=0}^r \frac{1}{i!(r+2-i)!}\omega \left( f^{(r+2-i)}, [n]^{-\frac{1%
}{2}} \right)$\newline
$\times \left( B_{n,p,q}( \mid t-x \mid^{r+2};x) + [n] B_{n,p,q}( \mid t-x
\mid^{r+4};x) \right)$\newline
$\leq \sum_{i=0}^r \frac{1}{i!(r+2-i)!}\omega \left( f^{(r+2-i)}, [n]^{-%
\frac{1}{2}} \right)  \left(K_{r+2} \frac{x(1-x)}{[n]^{\frac{r}{2}+1}}+
K_{r+4} \frac{x(1-x)}{[n]^{\frac{r}{2}+1}}\right)$\newline
$= (K_{r+2}+K_{r+4}) \frac{x(1-x)}{[n]^{\frac{r}{2}+1}}\sum_{i=0}^r \frac{1}{%
i!(r+2-i)!}\omega \left( f^{(r+2-i)}, [n]^{-\frac{1}{2}} \right)$. \newline
Therefore
\begin{equation*}
\mid B_{n,p,q}^{[r]}(f;x) - I_1 \mid \leq (K_{r+2}+K_{r+4}) \frac{x(1-x)}{%
[n]^{\frac{r}{2}+1}}\sum_{i=0}^r \frac{1}{i!(r+2-i)!}\omega \left(
f^{(r+2-i)}, [n]^{-\frac{1}{2}} \right).
\end{equation*}
Now we simplify for $I_1$
\begin{eqnarray*}
I_1 & = & \sum_{k=0}^{n}P_{n,k}(p,q;x)\sum_{i=0}^r \frac{(x-\frac{[k]}{p^{k-n}[n]}%
)^i}{i!} \sum_{l=i}^{r+2} \frac{f^{(l)}(x)}{(l-i)!} \left(\frac{[k]}{p^{k-n}[n]}%
-x\right)^{l-i} \\
& = & \sum_{k=0}^{n}P_{n,k}(p,q;x)\sum_{i=0}^r \frac{(-1)^i}{i!}
\sum_{l=i}^{r} \frac{f^{(l)}(x)}{(l-i)!} \left(\frac{[k]}{p^{k-n}[n]}-x\right)^{l}
\\
& + & \sum_{k=0}^{n}P_{n,k}(p,q;x)\sum_{i=0}^r \frac{(-1)^i}{i!} \frac{%
f^{(r+1)}(x)}{(r+1-i)!}\left(\frac{[k]}{p^{k-n}[n]}-x\right)^{r+1} \\
& + &\sum_{k=0}^{n}P_{n,k}(p,q;x)\sum_{i=0}^r \frac{(-1)^i}{i!}\frac{%
f^{(r+2)}(x)}{(r+2-i)!}\left(\frac{[k]}{p^{k-n}[n]}-x\right)^{r+2} \\
& = & \sum_{k=0}^{n}P_{n,k}(p,q;x) \sum_{l=0}^{r} \frac{f^{(l)}(x)}{(l)!}
\left(\frac{[k]}{p^{k-n}[n]}-x\right)^{l} \sum_{i=0}^{l}\binom{l}{i}(-1)^i \\
& + & \frac{f^{(r+1)}(x)}{(r+1)!}\sum_{k=0}^{n}P_{n,k}(p,q;x) \left(\frac{[k]%
}{p^{k-n}[n]}-x\right)^{r+1} \sum_{i=0}^{r}\binom{r+1}{i}(-1)^i \\
& + & \frac{f^{(r+2)}(x)}{(r+2)!}\sum_{k=0}^{n}P_{n,k}(p,q;x) \left(\frac{[k]%
}{p^{k-n}[n]}-x\right)^{r+2} \sum_{i=0}^{r}\binom{r+2}{i}(-1)^i. \\
\end{eqnarray*}
For $n \in \mathbb{N},~~~r \in \mathbb{N}\cup \{0\}$ we have
\begin{equation*}
\sum_{i=0}^{r}\binom{r+1}{i}(-1)^i=(-1)^r,~~~\sum_{i=0}^{r}\binom{r+2}{i}%
(-1)^i=(r+1)(-1)^r
\end{equation*}
Therefore
\begin{eqnarray*}
I_1 & = & f(x)+\frac{(-1)^rf^{(r+1)}(x) B_{n,p,q}\left((t-x)^{r+1};x\right)}{%
(r+1)!} \\
& + & \frac{(-1)^rf^{(r+2)}(x) B_{n,p,q}\left((t-x)^{r+2};x\right)}{(r+2)!}.
\end{eqnarray*}
This complete the proof.
\end{proof}

\begin{corollary}
Let $p=p_{n},~~q=q_{n},~~0<q_{n}<p_{n}\leq 1$ satisfy \eqref{eq16} and $f\in
C^{2}[0,1]$ for a fixed number $r\in \mathbb{N}\cup \{0\}$. Then for every $%
x\in \lbrack 0,1]$ we have
\begin{equation*}
\bigg{|}B_{n,p_{n},q_{n}}^{[r]}(f;x)-f(x)-\frac{f^{\prime \prime }(x)}{2}%
\frac{x(1-x)}{[n]}\bigg{|}\leq K\frac{x(1-x)}{[n]}\omega \left( f^{\prime
\prime}, [n]^{-\frac{1}{2}}\right) ,
\end{equation*}%
where $K=\frac{K_{2}+K_{4}}{2}$. Moreover,
\begin{equation*}
\lim_{n\rightarrow \infty }[n]\left( B_{n,p_{n},q_{n}}(f;x)-f(x)\right) =%
\frac{x(1-x)}{2}f^{\prime \prime }(x)
\end{equation*}%
uniformly on $[0,1]$.
\end{corollary}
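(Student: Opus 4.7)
The plan is to instantiate Theorem~\ref{t10} at $r=0$, because the stated inequality's right-hand side depends only on $f''$, on the constants $K_{2}$ and $K_{4}$, and on the factor $x(1-x)/[n]$---precisely the $r=0$ specialization of the quantitative estimate in Theorem~\ref{t10}; this also matches the smoothness hypothesis $f\in C^{2}[0,1]$ and the limit display, which is written for the unadorned operator $B_{n,p_{n},q_{n}}$. Since only the $i=0$ summand of~\eqref{eq25} survives when $r=0$, one has $B_{n,p,q}^{[0]}(f;x)=B_{n,p,q}(f;x)$, so the superscript $[r]$ in the inequality is effectively $[0]$, and the content of the statement for arbitrary fixed $r\in\mathbb{N}\cup\{0\}$ is the quantitative Voronovskaja-type bound for the base operator.

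Setting $r=0$ in Theorem~\ref{t10} the right-hand double sum collapses to the single $i=0$ term $\tfrac{1}{0!\,2!}\omega(f'',[n]^{-1/2})$, giving
\begin{equation*}
\left|B_{n,p,q}(f;x)-f(x)-f'(x)B_{n,p,q}(t-x;x)-\tfrac{1}{2}f''(x)B_{n,p,q}\bigl((t-x)^{2};x\bigr)\right|\le \tfrac{K_{2}+K_{4}}{2}\cdot\tfrac{x(1-x)}{[n]}\,\omega\!\left(f'',[n]^{-1/2}\right).
\end{equation*}
I would then substitute the central-moment identities $B_{n,p,q}(t-x;x)=0$ and $B_{n,p,q}((t-x)^{2};x)=\tfrac{x(1-x)}{[n]}$---the $(p,q)$-analogues of the classical Bernstein/Phillips formulas---available in~\cite{n1} or obtainable directly from~\eqref{eq3} by applying Proposition~\ref{t1} to compute $B_{n,p,q}(t;x)$ and $B_{n,p,q}(t^{2};x)$. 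This produces the claimed inequality with $K=(K_{2}+K_{4})/2$.

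For the limit, multiplying the derived inequality by $[n]$ yields
\begin{equation*}
[n]\left|B_{n,p_{n},q_{n}}(f;x)-f(x)-\tfrac{1}{2}f''(x)\tfrac{x(1-x)}{[n]}\right|\le K\,x(1-x)\,\omega\!\left(f'',[n]^{-1/2}\right).
\end{equation*}
Under~\eqref{eq16} one has $[n]_{p_{n},q_{n}}\to\infty$, so $[n]^{-1/2}\to 0$; the uniform continuity of $f''\in C[0,1]$ then yields $\omega(f'',[n]^{-1/2})\to 0$; and because $0\le x(1-x)\le\tfrac{1}{4}$, the right-hand side vanishes uniformly in $x\in[0,1]$. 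This establishes the uniform convergence $[n]\bigl(B_{n,p_{n},q_{n}}(f;x)-f(x)\bigr)\to\tfrac{1}{2}x(1-x)f''(x)$.

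The only delicate step is verifying the exact second-moment identity $B_{n,p,q}((t-x)^{2};x)=\tfrac{x(1-x)}{[n]}$: $(p,q)$-Bernstein-type operators frequently carry residual $p,q$-dependent prefactors in their second moment, and precisely this clean identity is what produces the constant $\tfrac{1}{2}$ in the Voronovskaja limit. Beyond that, the argument is a direct substitution into Theorem~\ref{t10} followed by the standard passage to the limit using~\eqref{eq16} and the uniform continuity of $f''$ on $[0,1]$.
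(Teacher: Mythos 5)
Your overall route---specialize Theorem \ref{t10} to $r=0$, observe that $B_{n,p,q}^{[0]}=B_{n,p,q}$, substitute the first and second central moments, then multiply by $[n]$ and let $n\to\infty$ using \eqref{eq16} and the uniform continuity of $f''$---is exactly the argument the paper intends (the corollary is stated without proof as an immediate consequence of Theorem \ref{t10}), and your reading that the hypothesis $f\in C^{2}[0,1]$ effectively forces $r=0$ is the only consistent one. The first-moment identity $B_{n,p,q}(t-x;x)=0$ is indeed correct for \eqref{eq3}.

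However, the step you yourself flag as ``delicate'' is a genuine gap, and as stated it fails. For the operator \eqref{eq3} the second central moment is not $\frac{x(1-x)}{[n]}$; a direct computation (for instance $n=2$, where $B_{2,p,q}(t^{2};x)=\frac{p}{[2]}x(1-x)+x^{2}$) together with $[n]=p^{n-1}+q[n-1]$ gives
\begin{equation*}
B_{n,p,q}\bigl((t-x)^{2};x\bigr)=\frac{p^{\,n-1}x(1-x)}{[n]},
\end{equation*}
so the clean identity holds only when $p=1$; citing \cite{n1} cannot repair this, since that reference treats precisely the case $p=1$. Substituting the correct moment into the $r=0$ case of Theorem \ref{t10} yields the inequality with $\frac{f''(x)}{2}\frac{p^{n-1}x(1-x)}{[n]}$ in place of $\frac{f''(x)}{2}\frac{x(1-x)}{[n]}$, and the discrepancy $\frac{|f''(x)|}{2}(1-p^{n-1})\frac{x(1-x)}{[n]}$ is not dominated by $K\frac{x(1-x)}{[n]}\,\omega\bigl(f'',[n]^{-1/2}\bigr)$ in general. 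Likewise, what your argument actually gives in the limit is $[n]\bigl(B_{n,p_{n},q_{n}}(f;x)-f(x)\bigr)\to\bigl(\lim_{n}p_{n}^{\,n-1}\bigr)\frac{x(1-x)}{2}f''(x)$, and \eqref{eq16} alone does not force $p_{n}^{\,n-1}\to1$ (take $p_{n}=1-\frac{1}{n}$, for which $p_{n}^{\,n-1}\to e^{-1}$). So your proof closes only if one either inserts the factor $p^{\,n-1}$ into the conclusion or additionally assumes $p_{n}^{\,n}\to1$; this defect is inherited from the paper's own (proofless) statement of the corollary, but your write-up does not resolve it---asserting the unverified identity is exactly the point where the argument breaks.
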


\textbf{Acknowledgement.} Second author (MN) acknowledges the financial
support of University Grants Commission (Govt. of Ind.) for awarding BSR
(Basic Scientific Research) Fellowship.


\begin{thebibliography}{99}
\bibitem{n0} A. Aral, V. Gupta and R. P. Agarwal, Application of $q$%
-Calculus in Operator Theory, Springer, New York, 2013. .

\bibitem{n21} S.N. Bernstein, D\'{e}mostration du th\'{e}or\`{e}eme de
Weierstrass fond\'{e}e sur le calcul de probabilit\'{e}s, Comm. Soc. Math.
Kharkow (2), 13 (1912/1913) 1-2.

%\bibitem{n3} O. Do\u{g}ru, On Bleimann, Butzer and Hahn type generalization
%of Bal\'{a}zs operators," Studia Univ. Babe\c{s}-Bolyai. Math., 47(4) (2002)
%37-45.

%\bibitem{n4} A.D. Gadjiev and \"{O}. \c{C}akar, On uniform approximation
%by Bleimann, Butzer and Hahn operators on all positive semi-axis, Trans.
%Acad. Sci. Azerb.Ser. Phys. Tech. Math. Sci., 19 (1999) 21-26.

\bibitem{n5} M.N. Hounkonnou, J. D\'{e}sir\'{e} and B. Kyemba, $\mathcal{R}%
(p,q)$-calculus: differentiation and integration, SUT Jour. Math., 49(2)
(2013) 145-167.

\bibitem{n6} B. Lenze, Bernstein-Baskakov-Kantorovich operators and
Lipschitz-type maximal functions, in: Colloq. Math. Soc. Janos Bolyai, 58,
Approx. Th., (1990) 469-496.

%\bibitem{n7} A. Lupa\c{s}; A $q$-analogue of the Bernstein operator,
%University of Cluj- Napoca, Seminar on Numerical and Statistical Calculus, 9
%(1987) 85-92.

\bibitem{n22} A. Lupa\c{s}, A $q$-analogue of the Bernstein operator,
Seminar on Numerical and Statistical Calculus, University of Cluj-Napoca,
9(1987) 85-92.

\bibitem{n1} N. Mahmudov, The moments for $q$-Bernstein operators in the
case $0<q<1$. Numer Algor (2010) 53:439--450, DOI 10.1007/s11075-009-9312-1.

\bibitem{n8} M. Mursaleen, K.J. Ansari and A. Khan, On $(p,q)$-analogue of
Bernstein operators, Appl. Math. Comput., 266(2015), 874-882.

\bibitem{n9} M. Mursaleen, K.J. Ansari and A. Khan, Some approximation
results by $(p,q)$-analogue of Bernstein-Stancu operators, Appl. Math.
Comput., 264 (2015) 392-402.

%\bibitem{n10} M. Mursaleen and A. Khan, Generalized $q$-Bernstein-Schurer
%operators and some approximation theorems, J. Funct. Spaces Appl., Volume
%2013.

\bibitem{n19} M. Mursaleen, Md. Nasiuzzaman and Ashirbayev Nurgali, Some
approximation results on Bernstein-Schurer operators defined by $(p,q)$%
-integers, Jou. Ineq. Appl.\textbf{, }2015 (2015):249.\textit{\ }

%\bibitem{n20} M. Mursaleen, Md. Nasiruzzaman, Asif Khan and Khursheed J.
%Ansari, Some approximation results on Bleimann-Butzer-Hahn operators defined
%by $(p,q)$-integers arXiv:1505.00392v1 [math.CA] 3 May 2015.
%
%\bibitem{n18} M. Mursaleen and Md. Nasiruzzaman, Some approximation
%properties of bivariate Bleimann-Butzer and Hahn operators based on $(p,q)$%
%-integers arXiv:1506.02487v1 [math.CA] 24 May 2015.

%\bibitem{n11} G.M. Phillips, Bernstein polynomials based on the $q$%
%-integers, The heritage of P.L.Chebyshev: Ann. Numer. Math. 4 (1997) 511-518.

\bibitem{n24} G. M. Phillips, Bernstein polynomials based on the $q$%
-integers, Ann. Numer. Math., 4 (1997), 511--518.

\bibitem{n12} P.N. Sadjang, On the fundamental theorem of $(p,q)$-calculus
and some $(p,q)$-Taylor formulas, arXiv:1309.3934 [math.QA].

\bibitem{n13} V. Sahai and S. Yadav, Representations of two parameter
quantum algebras and $(p,q)$-special functions, J. Math. Anal. Appl. 335
(2007) 268-279.

%\bibitem{n14} D.D. Stancu, Approximation of functions by a new class of
%linear polynomial operators, Rev. Roumaine Math. Pures Appl., 13 (1968)
%1173-1194.

\bibitem{n2} P. Sabanc{\i }gil, Higher order generalization of $q$-Bernstein
operators, Jour. Comput. Analy. Appl., 12 (2010) 821-827.
\end{thebibliography}
\end{document}